\newtheorem{lemma}{Lemma}[section]
\newtheorem{theorem}{Theorem}[section]
\newtheorem{proposition}{Proposition}[section]
\newtheorem{corollary}{Corollary}[section]
\numberwithin{equation}{section}
\newcommand{\R}{\mathbb{R}}
\newcommand{\T}{\mathbb{T}}
\newcommand{\CE}{\mathcal{E}}
\def\v{\varepsilon}
\def\n{\nu}
\def\b{\beta}
\def\l{\lambda}
\def\r{\rho}
\def\f{\frac}
\begin{document}

\title[Global existence for the ES-BGK model]{Global Existence for the Ellipsoidal BGK Model with Initial Large Oscillations}

\author[R.-J. Duan]{Renjun Duan}
\address[R.-J. Duan]{Department of Mathematics, The Chinese University of Hong Kong, Shatin, Hong Kong}
\email{rjduan@math.cuhk.edu.hk}

\author[Y. Wang]{Yong Wang}
\address[Y. Wang]{Institute of Applied Mathematics, AMSS, CAS, Beijing 100190, P.R.~China, and Department of Mathematics, The Chinese University of Hong Kong, Shatin, Hong Kong}
\email{yongwang@amss.ac.cn}

\author[T. Yang]{Tong Yang}
\address[T. Yang]{Department of Mathematics, City University of Hong Kong, Hong Kong}
\email{matyang@cityu.edu.hk}

\date{\today}

\maketitle
\begin{abstract}
The ellipsoidal BGK model was introduced in \cite{Ho} to fit the correct Prandtl number in the Navier-Stokes approximation of the classical BGK model. In the paper we establish the global existence of mild solutions to the Cauchy problem on the model for a class of initial data allowed to have large oscillations. The proof is motivated by a recent study of the same topic on the Boltzmann equation in \cite{DHWY}.    
\end{abstract}


\thispagestyle{empty}

\section{Introduction}

In this paper, we consider the Ellipsoidal BGK (ES-BGK for short) model
\begin{equation}\label{1.1}
F_t+v\cdot\nabla_x F=A_{\nu}\Big(M_{\n}-F\Big).
\end{equation}
Here the unknown $F=F(t,x,v)\geq 0$ denotes the density distribution function of gas particles which have position $x\in\Omega=\mathbb{R}^3$ or $\mathbb{T}^3$ 
and  velocity $v\in\mathbb{R}^3$  at  time $t>0$.  Corresponding to a given parameter $\nu\in (-1/2,1)$ related to the Prandtl number of the above Boltzmann-type  model (cf.~\cite{ALPP}), $A_\n=A_\n(F)$ is the collision frequency and $M_\n=M_\n(F)$ is the anisotropic Gaussian, both depending on the unknown function $F$; their explicit forms will be given later on. We refer readers  to \cite{BGK,Wa} and \cite{Ho} for the origin and background of the ES-BGK model, \cite{ABLP, FJ,GT}  for the numerical investigations of the model, and \cite{Yun,Yun1} for the recent mathematical studies on  the existence of solutions.   

For given $F(t,x,v)$, we introduce the usual fluid quantities density $\r$, velocity $u$, temperature $T$ and stress tensor $\Theta$, respectively, as 
\begin{align*}
\r(t,x)&=\int_{\mathbb{R}^3}F(t,x,v)dv,\\
u(t,x)&=\frac{1}{\r}\int_{\mathbb{R}^3}vF(t,x,v)dv,\\
T(t,x)&=\frac{1}{3\r}\int_{\mathbb{R}^3}|v-u|^2F(t,x,v)dv,\\
\Theta(t,x)&=\frac{1}{\r}\int_{\mathbb{R}^3}(v-u)\otimes(v-u) F(t,x,v)dv,
\end{align*}
and further denote the 
tensor $\mathcal{T}_\n$ by 
\begin{equation}\label{1.3}
\mathcal{T}_\n=(1-\n) T\,\mathrm{Id}+\n\Theta,
\end{equation}
where $\mathrm{Id}$ is the $3\times3$ identity matrix. Then the collision frequency $A_\n=A_\n(F)$ and the anisotropic Gaussian $M_\n=M_\n(F)$ are respectively defined by  
\begin{align*}
M_{\n}
=\f{\r}{\sqrt{\mbox{det}(2\pi\mathcal{T}_\n)}}\exp\Big(-\f12(v-u)^{t}\mathcal{T}_\n^{-1}(v-u)\Big),
\end{align*}
and  
\begin{eqnarray*}
A_\n=\f{\r T}{1-\n}.
\end{eqnarray*}

We are interested in the well-posedness of the Cauchy problem on the 
equation \eqref{1.1} with initial 
data
\begin{equation}\label{1.6}
F(t,x,v)|_{t=0}=F_0(x,v).
\end{equation}
The right-hand collision term in the ES-BGK model 
satisfies (cf.\cite{ABLP,ALPP})
\begin{align*}
\int_{\mathbb{R}^3}\left(M_\n-F\right)\left(\begin{array}{c}1\\ v \\ |v|^2 \end{array} \right)dv=\left(\begin{array}{c}0\\0\\0\end{array} \right).
\end{align*}
This implies that for any solution $F(t,x,v)$, one has conservations of defect mass, defect momentum and defect energy as in \cite{Guo}: 
\begin{align*}
\int_{\Omega}\int_{\mathbb{R}^3}(F(t,x,v)-\mu(v))\,dvdx&= \int_{\Omega}\int_{\mathbb{R}^3}(F_0(x,v)-\mu(v))\,dvdx\triangleq M_0,\\
\int_{\Omega}\int_{\mathbb{R}^3}v(F(t,x,v)-\mu(v))\,dvdx&= \int_{\Omega}\int_{\mathbb{R}^3}v(F_0(x,v)-\mu(v))\,dvdx\triangleq J_0,\\
\int_{\Omega}\int_{\mathbb{R}^3}|v|^2(F(t,x,v)-\mu(v))\,dvdx&= \int_{\Omega}\int_{\mathbb{R}^3}|v|^2(F_0(x,v)-\mu(v))\,dvdx\triangleq E_0,
\end{align*}
for all $t>0$, where 
\begin{equation*}
\mu(v)=\f{1}{(2\pi)^{\f32}}\exp\left(-\f{|v|^2}{2}\right)
\end{equation*}
is the normalized global Maxwellian. As shown in \cite{ALPP}, 
the entropy dissipation property also holds:
\begin{equation*}
\f{d}{dt}\int_{\Omega}\int_{\mathbb{R}^3}(F\ln{F}-\mu\ln{\mu})dvdx\leq 0,
\end{equation*}
for all $t>0$. 
For later use, as in \cite{DHWY}, due to Proposition \ref{prop2.4}  we denote
\begin{equation*}
\mathcal{E}(F_0):= \int_{\Omega}\int_{\mathbb{R}^3}F_0\ln F_0-\mu\ln\mu \,dvdx+[\f{3}{2}\ln(2\pi)-1]M_0+\f12E_0.	
\end{equation*}
Note that one has $\mathcal{E}(F_0)\geq 0$ for any $F_0(x,v)\geq 0$, cf.~\cite{DHWY}.

\

The main result of the paper is stated as follows.

\begin{theorem}\label{thm1.1}
Let $w(v)=(1+|v|)^{\b}$ with  $\b>7$. Assume that $F_0(x,v)\geq 0$, $\|w F_0\|_{L^\infty}<+\infty$, and
\begin{equation}\label{1.17}
\inf\limits_{t\geq 0,x\in \Omega}\int_{\mathbb{R}^3} F_0(x-vt,v)dv\geq C_0,
\end{equation}
for a positive constant $C_0>0$. There are  constants $\v_0>0$, $\tilde{C}_1\geq 1$ and $\tilde{C}_2\geq1$ such that if 
\begin{equation}
\label{ad.i1}
\mathcal{E}(F_0)+\sup_{t\geq t_1, x\in \Omega}e^{-\f{t}{1-\nu}}\Big|\int_{\mathbb{R}^3}(1,v,|v|^2, v\otimes v)[F_0(x-vt,v)-\mu(v)]dv \Big|\leq \v_0,
\end{equation}
with $t_1:=(\tilde{C}_1\|w F_0\|_{L^\infty})^{-1}$,
then the Cauchy problem \eqref{1.1}, \eqref{1.6}  of the ES-BGK model admits a unique global-in-time mild solution $F(t,x,v)\geq 0$ such that
\begin{equation*}
\sup_{t\geq 0} \|w F(t)\|_{L^\infty}\leq \tilde{C}_2,
\end{equation*}
where $\v_0$,  $\tilde{C}_1$ and  $\tilde{C}_2$ depend only on $\n, ~C_0$ and $\|w F_0\|_{L\infty}$.
\end{theorem}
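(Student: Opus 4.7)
The plan is to adapt the weighted $L^\infty$ mild-solution framework of \cite{DHWY} developed for the Boltzmann equation to the anisotropic ES-BGK setting. First I rewrite \eqref{1.1} as $F_t+v\cdot\nabla_x F+A_\n F=A_\n M_\n$ and integrate along the backward characteristic $x\mapsto x-vt$, obtaining the mild form
\begin{equation*}
F(t,x,v)=e^{-\int_0^t A_\n^\#(\tau)d\tau}\,F_0(x-vt,v)+\int_0^t e^{-\int_s^t A_\n^\#(\tau)d\tau}(A_\n M_\n)^\#(s)\,ds,
\end{equation*}
where $f^\#(s):=f(s,x-v(t-s),v)$. A solution is then constructed iteratively: given $F^n\ge 0$, set $A_\n^{n+1}:=A_\n(F^n)$ and $M_\n^{n+1}:=M_\n(F^n)$, and let $F^{n+1}$ be defined by the above formula with these coefficients.

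The core task is to establish the uniform bound $\sup_n\sup_{t\ge 0}\|wF^n(t)\|_{L^\infty}\le\tilde C_2$. This decomposes into three pieces. First, a positive lower bound $A_\n^n=\r^n T^n/(1-\n)\ge c_0>0$ is needed. On the short interval $[0,t_1]$ with $t_1\sim\|wF_0\|_{L^\infty}^{-1}$, I would compare $F^n$ to the pure transport profile $F_0(x-vt,v)$ and combine with the no-vacuum hypothesis \eqref{1.17} to obtain $\r^n(t,x)\ge C_0/2$, together with a matching lower bound on $T^n$. Second, a uniform pointwise upper bound $w(v)M_\n^n\le C$ is needed, and this reduces to showing uniform positive definiteness of $\mathcal{T}_\n$ with eigenvalues bounded away from $0$ and $\infty$. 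Since $\mathcal{T}_\n=(1-\n)T\,\mathrm{Id}+\n\Theta$ with $\Theta$ symmetric positive semidefinite and $\mathrm{tr}\,\Theta=3T$, the hypothesis $\n\in(-1/2,1)$ is precisely what forces positivity of the eigenvalues, while the weight exponent $\b>7$ gives finite high moments that control $\Theta$ from above. Third, with these in hand the Duhamel term closes because $\int_0^t e^{-\int_s^t A_\n d\tau}A_\n(s)\,ds\le 1$, so the $L^\infty$ estimate on $F^{n+1}$ reduces to $\|wM_\n^n\|_{L^\infty}$ plus $\|wF_0\|_{L^\infty}$.

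For times $t\ge t_1$ the short-time comparison is no longer directly available, and the smallness conditions take over. The entropy dissipation ensures $\mathcal{E}(F(t))\le\mathcal{E}(F_0)\le\v_0$. Following \cite{DHWY}, smallness of this relative entropy combined with the weighted $L^\infty$ bound yields Csisz\'ar--Kullback-type closeness of the macroscopic moments of $F(t)$ to those of $\m$. Together with the oscillation hypothesis on the free-transport moments in \eqref{ad.i1}, this forces $(\r-1,u,T-1,\Theta-\mathrm{Id})(t,x)$ to remain small uniformly in $(t,x)$ for $t\ge t_1$. Consequently $\mathcal{T}_\n$ stays close to $\mathrm{Id}$, $A_\n$ stays bounded from below by a positive constant, and $M_\n$ remains close to $\m$, closing the bootstrap for all $t\ge 0$.

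The main technical obstacle is the nonlinear coupling through the matrix-valued tensor $\mathcal{T}_\n$: unlike scalar BGK, $M_\n$ depends on $F$ through the full stress tensor $\Theta$, so matrix estimates on $\mathcal{T}_\n^{-1}$ and $\det\mathcal{T}_\n$ must be propagated along the iteration without losing the smallness, and the Lipschitz bound for $M_\n(F^{n+1})-M_\n(F^n)$ is more delicate than in the scalar case, particularly near the boundary $\n=-1/2$ where positive definiteness of $\mathcal{T}_\n$ degenerates. Once the a priori $L^\infty$ bounds are in place, convergence of $\{F^n\}$ to a limit $F$ and uniqueness both follow from a Gronwall argument applied to $F^{n+1}-F^n$ in the weighted $L^\infty$ norm, using the above Lipschitz dependence of $A_\n$ and $M_\n$ on the macroscopic moments of $F$ within the ball $\{\|wF\|_{L^\infty}\le\tilde C_2\}$.
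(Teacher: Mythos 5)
Your proposal follows essentially the same route as the paper: the mild (Duhamel) formulation, a short-time weighted $L^\infty$ bound on $[0,t_1]$ with $t_1\sim\|wF_0\|_{L^\infty}^{-1}$ closed by a quadratic continuity argument, lower bounds on $\r$ and $T$ from the no-vacuum condition \eqref{1.17} and the Perthame--Pulvirenti inequalities, eigenvalue control of $\mathcal{T}_\n$ from $\n\in(-1/2,1)$ (Lemma \ref{lem2.2}), and a bootstrap on the macroscopic quantities for $t\geq t_1$ closed by entropy smallness plus the free-transport moment hypothesis in \eqref{ad.i1}. One step in your outline is described too loosely to close as written: you assert that entropy smallness ``yields Csisz\'ar--Kullback-type closeness of the macroscopic moments of $F(t)$ to those of $\mu$,'' but the entropy functional only controls $F-\mu$ in an $L^1_{x,v}$/$L^2_{x,v}$ sense (Proposition \ref{prop2.4}), which does \emph{not} by itself give the required $\sup_x$ bound on $\int(1,v,|v|^2,v\otimes v)(F-\mu)\,dv$. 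The actual mechanism (Lemma \ref{lem4.2}) is dispersive: in the Duhamel term one changes variables $v\mapsto y=x-v(t-s)$, converting the velocity integral at a fixed $x$ into a spatial integral at the cost of a factor $(t-s)^{-3}$, which is why the time integral must be split near $s=t$ (the small-$\la$ piece absorbed by a smallness-of-$\la$ argument) and, on the torus, an additional truncation $|v|\leq N$ is needed --- this is also where $\b>7$ is genuinely used, not merely for finiteness of second moments. Since you explicitly defer to \cite{DHWY}, where this device appears, the architecture is sound, but your write-up should make this change of variables explicit rather than attributing the pointwise-in-$x$ moment control to a Csisz\'ar--Kullback inequality alone.
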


It should be pointed out that the initial data under the assumptions of Theorem \ref{thm1.1} are allowed to have large oscillations in the spatial variable. For example, one may take 
\begin{align*}
F_0(x,v)=\r_0(x)\mu(v),
\end{align*}
with $0<C^{-1}\leq \r_0(x)\leq C$ and $\|\r_0-1\|_{L^1_x}\ll 1$. Then it is straightforward to check that $F_0(x,v)$ satisfies all the conditions of Theorem \ref{thm1.1}. 
Indeed, first of all, it is easy to observe that  $\|w F_0\|_{L^\infty}$ is finite, \eqref{1.17} is true and also it holds that
\begin{equation*}
\mathcal{E}(F_0)\leq  \|\r_0\ln\r_0-\r_0+1\|_{L^1_x}+C\|\r_0-1\|_{L^1_x}.
\end{equation*}
To estimate the second term on the left of \eqref{ad.i1}, we divide into two cases as in \cite{DHWY}. 

\medskip
\noindent{\it Case $\Omega=\R^3$}: For each $t\geq t_1$ and $x\in \R^3$, one has 
\begin{multline*}
\int_{\mathbb{R}^3}(1+|v|^2)|F_0(x-vt,v)-\mu(v)|\,dv
\leq C\int_{\mathbb{R}^3}|\r_0(x-vt)-1|\,dv\\
= Ct^{-3}\int_{\mathbb{R}^3}|\r_0(y)-1|\,dy\leq Ct_1^{-3}\|\r_0-1\|_{L^1_x}.
\end{multline*}

\medskip
\noindent{\it Case $\Omega=\T^3$}: For each $t\geq t_1$ and $x\in \T^3$,  it holds that
\begin{multline*}
\int_{\mathbb{R}^3}(1+|v|^2)|F_0(x-vt,v)-\mu(v)|\,dv\leq \int_{\R^3}|\rho_0(x-vt)-1| (1+|v|^2)\mu(v)\,dv\\
\leq C\int_{|v|\geq N} (1+|v|^2)\mu(v)\,dv+ \frac{C[1+(Nt)^3]}{t^3} \int_{\T^3} |\rho_0(y)-1|\,dy,\end{multline*}
and hence
\begin{equation*}
\int_{\mathbb{R}^3}(1+|v|^2)|F_0(x-vt,v)-\mu(v)|\,dv\leq C(N) +C (t_1^{-3}+N^3)\|\r_0-1\|_{L^1_x},
\end{equation*}
where $C(N)$ tends to zero as $N$ goes to infinity. 

\medskip
\noindent Recall that $t_1>0$ depends only on the upper bound of $\r_0$. Therefore,  with the above estimates, one can see that the smallness assumption on \eqref{ad.i1} in Theorem \ref{thm1.1} can be satisfied by requiring $ \|\r_0\ln\r_0-\r_0+1\|_{L^1_x}+\|\r_0-1\|_{L^1_x}$ to be small. Under this situation, it is easy to see that initial data can be allowed to have large oscillations. In fact, \eqref{ad.i1} should contain much more general initial data with large oscillations.

\medskip

The rest of the paper is arranged as follows. We give some basic lemmas in Section 2, and then establish the local  $L^\infty$ estimates and global $L^\infty$ estimates in Section 3 and Section 4, respectively. Thus Theorem \ref{thm1.1} immediately follows by the same argument as in \cite{DHWY}.


\medskip

\noindent{\it Notations.}  Throughout this paper, $C$ denotes a generic positive constant which may vary from line to line.  
$\|\cdot\|_{L^2}$ denotes the standard $L^2(\Omega\times\mathbb{R}^3_v)$-norm, and $\|\cdot\|_{L^\infty}$ denotes the $L^\infty(\Omega\times\mathbb{R}^3_v)$-norm.


\section{Preliminaries}

We need some useful inequalities (cf.~\cite{Perthame}) in the following lemma stating the lower bounds of velocity-weighted $L^\infty$ norms of $F(t,x,v)$. 

\begin{lemma}\label{lem2.1}
Let $N_q(F)=\|(1+|v|)^{q} F(t,x,v)\|_{L^\infty_{x,v}}$, then it holds that 
\begin{align}
&\f{\r}{T^{\f32}}\leq CN_0(F),\label{2.1}
\\
&\r(T+|u|^2)^{\f{q-3}{2}}\leq C_q N_q(F),~q>5~~\mbox{or}~~0\leq q<3,\notag
\\
&\f{\r|u|^q}{((T+|u|^2)T)^{\f32}}\leq C_q N_q(F),~q>1,\notag
\end{align}
where $C$ and $C_q$ are constants  independent of $F$.
\end{lemma}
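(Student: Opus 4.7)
The plan is to prove all three inequalities by a common two-step recipe: cut the velocity integral at a threshold $R$, then optimize in $R$. For each fluid-quantity integral $\int\varphi(v)F\,dv$ at hand, I would split it as $\int_{\text{core}}+\int_{\text{tail}}$ --- where the core is $\{|v|\leq R\}$ or $\{|v-u|\leq R\}$ depending on the target --- bound the core by the pointwise inequality $F(t,x,v)\leq N_q(F)(1+|v|)^{-q}$, and bound the tail by Chebyshev's inequality against an already-controlled moment such as $\int|v-u|^2 F\,dv=3\rho T$ or $\int|v|^2 F\,dv=\rho(3T+|u|^2)$.

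For the first inequality I would split $\rho=\int F\,dv$ at $|v-u|=R$. The core contributes at most $CR^3 N_0(F)$ by volume $\times$ $L^\infty$ bound, and the tail at most $3\rho T/R^2$; balancing the two at $R^5\sim \rho T/N_0(F)$ yields Perthame's classical estimate $\rho\leq CN_0(F)T^{3/2}$.

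For the second inequality I would treat the two ranges of $q$ separately. When $0\leq q<3$, I split $\rho=\int F\,dv$ at $|v|=R$: the core is $\leq N_q(F)\int_{|v|\leq R}(1+|v|)^{-q}\,dv\leq CN_q(F)R^{3-q}$ (the integral being $O(R^{3-q})$ only because $q<3$), and the tail is $\leq C\rho(T+|u|^2)/R^2$; optimizing gives the claim. When $q>5$, I apply the same recipe to the second moment $\int|v|^2 F\,dv=\rho(3T+|u|^2)$: the core is $\leq R^2\rho$ and the tail is $\leq C_q N_q(F)R^{5-q}$ (the tail integral $\int_{|v|>R}|v|^2(1+|v|)^{-q}\,dv$ being finite at infinity only when $q>5$); choosing $R^2\sim T+|u|^2$ absorbs $R^2\rho$ into $\rho(T+|u|^2)$ on the left and leaves $\rho(T+|u|^2)^{(q-3)/2}\leq C_q N_q(F)$.

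For the third inequality I would start from $\rho u=\int vF\,dv$, so that $\rho|u|\leq \int|v|F\,dv$, split this first-moment integral analogously (core $\leq CN_q(F)R^{4-q}$ in the appropriate range of $q$, tail $\leq \rho(3T+|u|^2)/R$), optimize, and then interpolate the resulting bound with the first inequality and the elementary estimate $|u|^{q-1}\leq (T+|u|^2)^{(q-1)/2}$ to produce the claimed bound after exponent bookkeeping. The main obstacle is exactly this exponent bookkeeping: the dichotomy $q>5$ versus $0\leq q<3$ in the second inequality reflects the precise range in which one or the other side of the split integral converges, and in the gap $3\leq q\leq 5$ this elementary strategy fails. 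Once the correct split is selected, each case reduces to a one-parameter optimization over $R$.
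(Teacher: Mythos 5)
Your treatment of the first two inequalities is correct and is essentially the argument of Perthame--Pulvirenti that the paper cites for this lemma (the paper itself supplies no proof): a Chebyshev split of $\int F\,dv$ at $|v-u|=R$ against the moment $\int|v-u|^2F\,dv=3\rho T$ gives the first bound with $R\sim\sqrt{T}$, and a split at $|v|=R$ against $\int|v|^2F\,dv=\rho(3T+|u|^2)$ (respectively, a split of the second moment itself when $q>5$) gives the second after absorption. The minor caveats you would need to record (e.g.\ $\int_{|v|\le R}(1+|v|)^{-q}dv\le CR^{3-q}$ also for $R\le1$ because $q\ge0$) are harmless.

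The third inequality is where the proposal has a genuine gap. Carrying out your plan for, say, $q=2$: optimizing $\rho|u|\le CN_2R^{2}+\rho(3T+|u|^2)/R$ gives $\rho|u|\le CN_2^{1/3}(\rho(T+|u|^2))^{2/3}$; multiplying by $|u|\le(T+|u|^2)^{1/2}$ and inserting $\rho\le CN_0T^{3/2}\le CN_2T^{3/2}$ yields $\rho|u|^2\le CN_2\,T\,(T+|u|^2)^{7/6}$, and $T(T+|u|^2)^{7/6}$ is not $O\big(T^{3/2}(T+|u|^2)^{3/2}\big)$ as $T,|u|\to0$, so the exponents do not close; the same mismatch occurs for every $q$, and for $q\ge4$ the core integral $\int_{|v|\le R}|v|(1+|v|)^{-q}dv$ converges, so the optimization degenerates altogether. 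In fact no bookkeeping can rescue the statement in the full stated range $q>1$: for $F=\mathbf{1}_{\{|v-u_0|\le r\}}$ with $|u_0|=r\to0$ one has $\rho\sim r^3$, $|u|=r$, $T=r^2/5$, $N_q=O(1)$, while $\rho|u|^q/((T+|u|^2)T)^{3/2}\sim r^{q-3}\to\infty$ for $q<3$; the inequality can only hold for $q\ge3$ (the paper only ever invokes it with $q=\beta>5$). In that range the workable elementary proof is a case analysis rather than an interpolation: when $T+|u|^2\le1$ combine the first inequality with $|u|^q\le|u|^3\le(T+|u|^2)^{3/2}$; when $T+|u|^2\ge1$ and $|u|^2\le CT$ use the second inequality; and when $|u|^2\ge CT$ rerun the proof of the first inequality with the core restricted to $\{|v-u|\le R\}$ for $R\le|u|/2$, where $|v|\ge|u|/2$ upgrades the pointwise bound $F\le N_0$ to $F\le C_qN_q(1+|u|)^{-q}$ and yields $\rho(1+|u|)^q\le C_qN_qT^{3/2}$ directly.
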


The below lemma whose proof can be found in \cite{Yun} gives the relation between the temperature tensor $\mathcal{T}_{\n}$ and the scalar temperature function $T$.

\begin{lemma}\label{lem2.2}
Let $-1/2<\nu<1$, and define
\begin{equation*}
C_{\n1}\triangleq \min\{1-\nu,1+2\n\},~~\mbox{and}~~C_{\n2}\triangleq \max\{1-\nu,1+2\n\}.
\end{equation*}
Then, if the density function $\r(t,x)>0$, it holds that 
\begin{equation*}
C_{\n1}T(t,x) \,\mathrm{Id}\leq \mathcal{T}_{\n}\leq C_{\nu2} T(t,x) \,\mathrm{Id}.
\end{equation*}
\end{lemma}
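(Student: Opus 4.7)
The plan is to exploit that $\Theta(t,x)$ is symmetric positive semi-definite and has a known trace, so its spectrum is confined to a simple interval, and then read off the spectrum of $\mathcal{T}_\nu$ directly from the eigenvalues of $\Theta$.

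First I would observe that $\Theta=\frac{1}{\rho}\int_{\mathbb{R}^3}(v-u)\otimes(v-u)\,F\,dv$ is manifestly symmetric, and for any $\xi\in\mathbb{R}^3$ one has $\xi^{t}\Theta\xi=\frac{1}{\rho}\int_{\mathbb{R}^3}|(v-u)\cdot\xi|^2 F\,dv\geq 0$ since $F\geq 0$ and $\rho>0$; thus $\Theta$ is positive semi-definite. Next I would compute the trace: $\mathrm{tr}(\Theta)=\frac{1}{\rho}\int_{\mathbb{R}^3}|v-u|^2 F\,dv=3T$ by the very definition of $T$. Hence the (real, nonnegative) eigenvalues $\lambda_1,\lambda_2,\lambda_3$ of $\Theta$ satisfy $\lambda_i\in[0,3T]$ for each $i$.

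Then I would diagonalize $\Theta$ in an orthonormal basis. Because $\mathcal{T}_\nu=(1-\nu)T\,\mathrm{Id}+\nu\Theta$, in that same basis $\mathcal{T}_\nu$ is diagonal with eigenvalues $\mu_i=(1-\nu)T+\nu\lambda_i$. If $0\leq\nu<1$, then $\mu_i\geq(1-\nu)T$ and $\mu_i\leq(1-\nu)T+3\nu T=(1+2\nu)T$; if $-1/2<\nu<0$, the direction of the inequalities flips and one gets $\mu_i\geq(1-\nu)T+3\nu T=(1+2\nu)T$ together with $\mu_i\leq(1-\nu)T$. In either case, every eigenvalue of $\mathcal{T}_\nu$ lies in the interval $[C_{\nu 1}T,\,C_{\nu 2}T]$ with $C_{\nu 1}=\min\{1-\nu,1+2\nu\}$ and $C_{\nu 2}=\max\{1-\nu,1+2\nu\}$, and both endpoints are strictly positive on $\nu\in(-1/2,1)$. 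Translating the eigenvalue bounds back to matrix order (via the spectral theorem) yields exactly the claimed $C_{\nu 1}T\,\mathrm{Id}\leq\mathcal{T}_\nu\leq C_{\nu 2}T\,\mathrm{Id}$.

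There is no real obstacle here; the proof is essentially a two-line linear algebra argument once the positive semi-definiteness of $\Theta$ and the identity $\mathrm{tr}(\Theta)=3T$ are noted. The only point requiring any care is the case split on the sign of $\nu$, together with checking that the range $\nu\in(-1/2,1)$ keeps $C_{\nu 1}>0$ so that $\mathcal{T}_\nu$ is strictly positive definite (and hence invertible, as implicitly needed to define the anisotropic Gaussian $M_\nu$).
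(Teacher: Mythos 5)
Your proof is correct. The paper itself does not reproduce an argument for this lemma (it only cites [Yun]), and your spectral argument --- positive semi-definiteness of $\Theta$, the trace identity $\mathrm{tr}(\Theta)=3T$ forcing each eigenvalue of $\Theta$ into $[0,3T]$, then diagonalizing and splitting on the sign of $\nu$ --- is precisely the standard proof given there; the only (harmless) unaddressed corner case is $T=0$, where $\Theta=0$ and the inequality holds trivially.
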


It follows from Lemma \ref{lem2.2} that one has

\begin{corollary}\label{cor2.3}
 Assume $0<T(t,x)<\infty$, then it holds that
 \begin{equation*}
 C_{\n2}^{-1} T^{-1}(t,x)\, \mathrm{Id}\leq \mathcal{T}_\n^{-1}\leq  C_{\n1}^{-1} T^{-1}(t,x) \,\mathrm{Id},
 \end{equation*}
 and 
 \begin{equation*}
 C_{\n1}^3T^3(t,x)\leq \mathrm{det}\,\mathcal{T}_{\n}\leq  C_{\n2}^3T^3(t,x).
 \end{equation*}
\end{corollary}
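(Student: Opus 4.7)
The plan is to deduce both statements of Corollary \ref{cor2.3} directly from Lemma \ref{lem2.2} using elementary facts about symmetric positive definite matrices. Lemma \ref{lem2.2} already establishes the key sandwich estimate
\[
C_{\n1} T(t,x)\,\mathrm{Id}\leq \mathcal{T}_{\n}\leq C_{\n2} T(t,x)\,\mathrm{Id},
\]
and under the hypothesis $0<T(t,x)<\infty$ (together with $\r(t,x)>0$, which is needed to make sense of $\mathcal{T}_\n$ via \eqref{1.3}), this shows that $\mathcal{T}_\n$ is a symmetric positive definite matrix with spectrum contained in $[C_{\n1}T,C_{\n2}T]$.

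For the first claim, I would invoke the order-reversing property of matrix inversion on the cone of symmetric positive definite matrices: if $0<A\leq B$, then $B^{-1}\leq A^{-1}$. Applied twice to the two halves of Lemma \ref{lem2.2}, this immediately yields
\[
C_{\n2}^{-1}T^{-1}\,\mathrm{Id}\leq \mathcal{T}_\n^{-1}\leq C_{\n1}^{-1}T^{-1}\,\mathrm{Id}.
\]
Alternatively, and equivalently, I would diagonalize $\mathcal{T}_\n=Q\,\mathrm{diag}(\lambda_1,\lambda_2,\lambda_3)\,Q^{t}$ with $Q$ orthogonal and $\lambda_i\in[C_{\n1}T,C_{\n2}T]$; then $\mathcal{T}_\n^{-1}=Q\,\mathrm{diag}(\lambda_1^{-1},\lambda_2^{-1},\lambda_3^{-1})\,Q^{t}$, and since each $\lambda_i^{-1}\in[C_{\n2}^{-1}T^{-1},C_{\n1}^{-1}T^{-1}]$, the desired inequality follows by the same spectral characterization used in the proof of Lemma \ref{lem2.2}.

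For the second claim, the same diagonalization gives $\det\mathcal{T}_\n=\lambda_1\lambda_2\lambda_3$, and the eigenvalue bounds immediately yield
\[
C_{\n1}^{3}T^{3}\leq \det\mathcal{T}_\n\leq C_{\n2}^{3}T^{3}.
\]
There is no real obstacle here; the only mild point to be careful about is ensuring that $\mathcal{T}_\n$ is genuinely positive definite (so that inversion and the product-of-eigenvalues formula are legitimate), which follows from $T>0$, $\r>0$, and the left inequality in Lemma \ref{lem2.2}. Accordingly, the proof is essentially a two-line consequence of the preceding lemma.
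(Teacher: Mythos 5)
Your proof is correct and follows the same route the paper intends: the corollary is stated there as an immediate consequence of Lemma \ref{lem2.2}, obtained exactly by the eigenvalue/order-reversal argument you give for the inverse and the product-of-eigenvalues formula for the determinant. No gaps.
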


For the later proof, we also need the following proposition whose proof can be found in \cite{Guo} or \cite{DHWY}.


\begin{proposition}\label{prop2.4}
Let $F(t,x,v)$ be the solution to \eqref{1.1} and \eqref{1.6}, then it holds that 
\begin{align*}
	&\int_{\Omega}\int_{\mathbb{R}^3}\f{|F(t,x,v)-\mu(v)|^2}{4\mu(v)}I_{\{|F(t,x,v)-\mu(v)|\leq \mu(v)\}}dvdx\nonumber\\
	&\quad+\int_{\Omega}\int_{\mathbb{R}^3}\f{1}{4}|F(t,x,v)-\mu(v)|I_{\{|F(t,x,v)-\mu(v)|\geq \mu(v)\}}dvdx\nonumber\\
	&\leq \int_{\Omega}\int_{\mathbb{R}^3}F_0\ln F_0-\mu\ln\mu \,dvdx+[\f{3}{2}\ln(2\pi)-1]M_0+\f12E_0=
	\mathcal{E}(F_0).
	\end{align*}

\end{proposition}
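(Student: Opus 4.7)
The plan is to reduce the proposition to a single pointwise convexity bound comparing $F$ with $\mu$, then integrate it and invoke the conservation of defect mass and defect energy together with the entropy dissipation already recorded just above.

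First, I would establish the pointwise estimate
\[
F\ln F - F\ln\mu - F + \mu \;\ge\; \frac{(F-\mu)^2}{4\mu}\,I_{\{|F-\mu|\le\mu\}} + \frac{|F-\mu|}{4}\,I_{\{|F-\mu|\ge\mu\}}
\]
valid for all $F\ge 0$. Dividing by $\mu>0$ and setting $s=F/\mu$, $h(s)=s\ln s - s + 1$, this reduces to showing $h(s)\ge (s-1)^2/4$ on $[0,2]$ and $h(s)\ge (s-1)/4$ on $[2,\infty)$. The former follows from Taylor's theorem with integral remainder, since $h''(u)=1/u\ge 1/2$ on $(0,2]$; the latter holds at $s=2$ because $h(2)=2\ln 2 - 1 > 1/4$ and extends to all $s\ge 2$ since $h'(s)=\ln s \ge \ln 2 > 1/4$.

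Next, I would integrate this inequality over $\Omega\times\R^3_v$. Using $\ln\mu=-\tfrac{3}{2}\ln(2\pi)-|v|^2/2$ together with the algebraic identity
\[
F\ln F - F\ln\mu - F + \mu \;=\; (F\ln F - \mu\ln\mu) - (F-\mu)\ln\mu - (F-\mu),
\]
the integrated left-hand side decomposes as
\[
\int_{\Omega}\!\!\int_{\R^3}(F\ln F - \mu\ln\mu)\,dv\,dx + \tfrac{3}{2}\ln(2\pi)\!\int_{\Omega}\!\!\int_{\R^3}(F-\mu)\,dv\,dx + \tfrac{1}{2}\!\int_{\Omega}\!\!\int_{\R^3}|v|^2(F-\mu)\,dv\,dx - \!\int_{\Omega}\!\!\int_{\R^3}(F-\mu)\,dv\,dx.
\]

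Finally, the conservation of defect mass and defect energy replaces the last three integrals by the time-independent quantities $\tfrac{3}{2}\ln(2\pi)M_0$, $\tfrac{1}{2}E_0$, and $-M_0$, while the quoted entropy dissipation bounds the first integral by its $t=0$ value $\int\int(F_0\ln F_0-\mu\ln\mu)\,dv\,dx$. Assembling these recovers the right-hand side $\mathcal{E}(F_0)$. The only delicate step is the pointwise inequality with its explicit constants $1/4$, which dictates the choice of threshold $|F-\mu|=\mu$ separating the $L^2(\mu^{-1}\,dv\,dx)$ regime from the $L^1$ regime; everything else is routine rearrangement and bookkeeping against the conservation laws.
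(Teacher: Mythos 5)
Your argument is correct and is essentially the proof the paper itself defers to in \cite{Guo} and \cite{DHWY}: the pointwise relative-entropy convexity bound for $h(s)=s\ln s-s+1$ split at the threshold $|F-\mu|=\mu$, followed by integration, the conservation of defect mass and energy to produce $[\tfrac32\ln(2\pi)-1]M_0+\tfrac12 E_0$, and the entropy dissipation inequality to pass from $F$ to $F_0$. The explicit constants check out ($h''(u)=1/u\ge\tfrac12$ on $(0,2]$ gives the $(s-1)^2/4$ bound, and $h(2)=2\ln2-1>\tfrac14$ with $h'(s)=\ln s>\tfrac14$ gives the linear bound), so no gap.
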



\section{Local Estimates}

The mild form of the ES-BGK model equation \eqref{1.1}  can be written as 
\begin{eqnarray}
F(t,x,v)&=&F_0(x-vt,v) e^{-\int_0^tA_{\n}(\tau,x-v(t-\tau))d\tau}\notag\\
&&+\int_0^t e^{-\int_s^tA_{\n}(\tau,x-v(t-\tau))d\tau}A_{\n}(s,x-v(t-s))\notag\\
&&\qquad\qquad\qquad\qquad\qquad\times M_{\n}
(s,x-v(t-s),v)ds. \label{3.1}
\end{eqnarray}
Based on the mild formulation, one can obtain the a priori  estimates on the velocity-weighted $L^\infty$ norms of solutions in a short strictly positive time for initial data of possibly large oscillations.

\begin{lemma}\label{lem3.1}
Let $w(v)=(1+|v|)^\beta$ with $\beta>5$, and $t_1\triangleq (4C_\b(\n)\|w F_0\|_{L^\infty})^{-1}>0$, then it holds that
\begin{align*}
\|w F(t)\|_{L^\infty}\leq 2\|w F_0\|_{L^\infty},
\end{align*}
for all $t\in[0,t_1]$, where $C_\b(\n)$ is 
an explicitly computable constant depending only on $\b$ and $\n$.
\end{lemma}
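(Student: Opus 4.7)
The plan is to read off the estimate from the mild formulation \eqref{3.1} and close a short-time continuity argument. Since $A_\nu \geq 0$, every exponential factor $e^{-\int_s^t A_\nu\,d\tau}$ in \eqref{3.1} is $\leq 1$. Multiplying \eqref{3.1} by $w(v)$ and taking $L^\infty$ over $(x,v)$ therefore gives
\[
\|wF(t)\|_{L^\infty} \leq \|wF_0\|_{L^\infty} + \int_0^t \|wA_\nu M_\nu(s)\|_{L^\infty_{x,v}}\,ds.
\]
The heart of the matter is the pointwise nonlinear estimate $\|wA_\nu M_\nu(s)\|_{L^\infty_{x,v}} \leq C_\beta(\nu)\|wF(s)\|_{L^\infty}^2$. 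Once this is in hand, the prescribed $t_1$ is exactly tailored to close a standard continuity argument: set
\[
T^\ast \eqdef \sup\{t\geq 0 : \|wF(s)\|_{L^\infty} \leq 2\|wF_0\|_{L^\infty}\text{ for all } s\in[0,t]\},
\]
which is strictly positive by continuity of the mild solution. On $[0,\min(T^\ast, t_1)]$ the integral is bounded by $C_\beta(\nu)(2\|wF_0\|_{L^\infty})^2 t_1 = \|wF_0\|_{L^\infty}$, so the bootstrap upgrades to $\|wF(t)\|_{L^\infty}\leq 2\|wF_0\|_{L^\infty}$ and forces $T^\ast\geq t_1$.

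For the pointwise estimate, Corollary \ref{cor2.3} gives $M_\nu(v)\leq C\rho T^{-3/2}\exp(-|v-u|^2/(2C_{\nu 2}T))$, with $\rho,u,T$ the fluid quantities of $F(s,x,\cdot)$. Combining this with the elementary inequality $(1+|v|)^\beta \leq C_\beta[(1+|u|)^\beta+|v-u|^\beta]$ and the Gaussian moment bound $\sup_{r\geq 0} r^\beta e^{-r^2/(2a)}\leq C_\beta a^{\beta/2}$, then multiplying by $A_\nu = \rho T/(1-\nu)$, reduces the task to bounding
\[
\frac{\rho^2(1+|u|)^\beta}{T^{1/2}} \quad \text{and} \quad \rho^2 T^{(\beta-1)/2}
\]
by $C_\beta(\nu)h^2$, where $h=\|wF(s)\|_{L^\infty}$. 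The second quantity factors cleanly as $\rho^2 T^{(\beta-1)/2}=(\rho T)(\rho T^{(\beta-3)/2})$, where $\rho T\leq Ch$ comes from integrating $(1+|v|^2)F$ (finite since $\beta>5$) and $\rho T^{(\beta-3)/2}\leq \rho(T+|u|^2)^{(\beta-3)/2}\leq Ch$ by Lemma \ref{lem2.1}.

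The main obstacle is the first quantity $\rho^2(1+|u|)^\beta/T^{1/2}$, for which no single inequality in Lemma \ref{lem2.1} suffices: small $T$, large $|u|$, and moderate $\rho$ could in principle conspire against us. The remedy is a Perthame-style interpolation of all three bounds in Lemma \ref{lem2.1}: $\rho/T^{3/2}\leq CN_0(F)$ prevents $T$ from collapsing relative to $\rho$; $\rho(T+|u|^2)^{(\beta-3)/2}\leq CN_\beta(F)$ forces $\rho$ to decay as $|u|$ grows; and $\rho|u|^\beta\leq CN_\beta(F)((T+|u|^2)T)^{3/2}$, read together with the first bound, caps $|u|$ by a $\beta$-dependent constant whenever the first bound is near-saturated. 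A case split on $T\leq 1$ versus $T\geq 1$, and on $|u|^2\leq T$ versus $|u|^2>T$, combined with these interpolations, yields $\rho^2(1+|u|)^\beta/T^{1/2}\leq C_\beta h^2$. The anisotropy constants $C_{\nu 1}, C_{\nu 2}$ from Corollary \ref{cor2.3} and the factor $(1-\nu)^{-1}$ from $A_\nu$ assemble into the explicit $C_\beta(\nu)$, completing the argument.
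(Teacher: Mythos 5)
Your proposal is correct and follows essentially the same route as the paper: drop the nonnegative exponential factors in the mild form \eqref{3.1}, establish the pointwise bound $wA_\nu M_\nu\leq C_\beta(\nu)\|wF(s)\|_{L^\infty}^2$ via Lemma \ref{lem2.1} and Corollary \ref{cor2.3}, and close with the continuity argument tailored to $t_1=(4C_\beta(\nu)\|wF_0\|_{L^\infty})^{-1}$. The only difference is cosmetic: the paper bounds $A_\nu\leq C_\beta(\nu)\|wF(s)\|_{L^\infty}$ (via $\rho T\leq\frac13\int|\eta|^2F\,d\eta$) and $wM_\nu\leq C_\beta(\nu)\|wF(s)\|_{L^\infty}$ separately before multiplying, whereas you treat the product directly and spell out the Perthame-style case analysis behind the term $\rho(1+|u|)^\beta T^{-3/2}$, which the paper disposes of by citing Lemma \ref{lem2.1}.
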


\begin{proof}
It follows from \eqref{3.1} that
\begin{multline}\label{3.3}
|w(v)F(t,x,v)|
\leq  
\|w(v)F_0(v)\|_{L^\infty}\\
+\int_0^t A_{\n}(s,x-v(t-s))\cdot w(v) M_{\n}
(s,x-v(t-s),v)ds.
\end{multline}
In order to estimate the last integral term of \eqref{3.3}, one first notices that 
\begin{align}\label{3.4}
A_\n(s,x-v(t-s))&=\f{1}{1-\n}(\r T)(s,x-v(t-s))\nonumber\\
&\leq \f{1}{3(1-\n)}(3\r T+\f{1}{2}|u|^2)(s,x-v(t-s))\nonumber\\
&\leq \f{1}{3(1-\n)}\int_{\mathbb{R}^3}|\eta|^2F(s,x-v(t-s),\eta)d\eta\nonumber\\
&\leq C_\beta(\n)\sup_{y,\eta}\, (1+|\eta|)^\beta F(s,y,\eta),
\end{align}
due to $\beta>5$, where $C_\beta(\nu)$ which may vary from line to line is a generic constant depending only on $\beta$ and $\nu$. Moreover, it follows from Lemma \ref{lem2.1} and Corollary \ref{cor2.3} that
\begin{align}\label{3.5}
M_\n(s,y,v)
&\leq C(\n)\f{\r}{T^{\f32}}\exp\left(-\f{|v-u|^2}{C(\nu)T}\right) \leq C(\n)\f{\r}{T^{\f32}}\leq C(\nu)\|F(s)\|_{L^\infty},
\end{align}
and 
\begin{align}\label{3.6}
|v|^\b M_\n(s,y,v)
&\leq C_\beta |u|^\b M_\n(s,y,v)
+C_\beta|v-u|^\b M_\n(s,y,v)
\nonumber\\
&\leq C_\beta(\nu)\f{\r}{T^{\f32}}|u|^\b+C_\beta(\nu)\r T^{\f{\b-3}2}\nonumber\\
&\leq C_\beta (\n)\|(1+|v|)^{\b}F(s)\|_{L^\infty}.
\end{align}
Combining \eqref{3.5} and \eqref{3.6}, one gets that 
\begin{equation}\label{3.7}
|w(v) M_\n(s,y,v)
|\leq C_\b(\nu) \|w F(s)\|_{L^\infty}.
\end{equation}
Then it follows from \eqref{3.4} and \eqref{3.7} that for $y=x-v(t-s)$,
\begin{align}\label{3.8}
\Big|A_{\n}(s,y
)\cdot w(v) M_{\n}
(s,y
,v) \Big|\leq C_\b(\n)\|w F(s)\|^2_{L^\infty}.
\end{align}
Substituting \eqref{3.8} into \eqref{3.3}, one obtains that for all $t\geq 0$, 
\begin{align*}
\|w F(t)\|_{L^\infty}\leq \|w F_0\|_{L^\infty}+C_\b(\n)\int_0^t\|w F(s)\|^2_{L^\infty}ds.
\end{align*}
Choosing $t_1=(4C_\b(\n)\|w F_0\|_{L^\infty})^{-1}>0$, it is straightforward to verify by the continuity argument that
\begin{align*}
\sup_{0\leq t\leq t_1}\|w F(t)\|_{L^\infty}\leq 2\|w F_0\|
_{L^\infty}.
\end{align*}
Thus the proof of Lemma \ref{lem3.1} is complete.
\end{proof}


As a consequence of Lemma \ref{lem3.1}, 
one can further obtain some bounds on the macroscopic variables which will be used in the later proof.

\begin{lemma}\label{lem3.2}
Let $w(v)=(1+|v|)^\beta$ with $\beta>5$.  Assume that there is $C_0>0$ such that 
$F_0(x,v)\geq 0$ satisfies 
\begin{equation*}
\int_{\mathbb{R}^3} F_0(x-vt,v)dv\geq C_0,
\end{equation*}
for all $t\geq 0$ and $x\in \Omega$.
Then it holds that  for all $0\leq t\leq t_1$ and $x\in \Omega$,
\begin{align}
C_1^{-1}\leq \r(t,x),~T(t,x)\leq C_1,~~\mbox{and}~~|u(t,x)|\leq C_1, \label{3.13}
\end{align}
where $t_1>0$ is given in Lemma \ref{lem3.1}, and $C_1\geq 1$ is an explicitly computable constant  
depending only on $C_0$, $\nu$, $\beta$ and $\|w F_0\|_{L^\infty}$.
\end{lemma}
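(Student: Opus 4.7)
The plan is to combine the uniform $L^\infty$ bound from Lemma \ref{lem3.1} (which controls moments and thereby gives upper bounds on $\rho, \rho|u|, \rho T$) with the mild representation \eqref{3.1} (which propagates the pointwise lower bound \eqref{1.17} forward to give a lower bound on $\rho$), and then invoke the Perthame-type inequality \eqref{2.1} from Lemma \ref{lem2.1} to convert these bounds into a lower bound for $T$.

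First, I would establish the upper bounds. Since $\beta>5$, direct integration against $(1+|v|)^{-\beta}$ gives
\[
\rho(t,x)\leq C\|w F(t)\|_{L^\infty},\quad \rho|u|\leq C\|wF(t)\|_{L^\infty},\quad \int_{\R^3}|v|^2 F\,dv\leq C\|wF(t)\|_{L^\infty}.
\]
Using the identity $3\rho T+\rho|u|^2=\int|v|^2F\,dv$, both $\rho T$ and $\rho|u|^2$ are bounded by $C\|wF(t)\|_{L^\infty}$. By Lemma \ref{lem3.1}, all of these are bounded by $2C\|wF_0\|_{L^\infty}$ on $[0,t_1]$.

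Next, for the lower bound on $\rho$, I would drop the non-negative Maxwellian term in \eqref{3.1} and use Lemma \ref{lem3.1} together with \eqref{3.4} to bound the exponent: for $\tau\in[0,t_1]$,
\[
A_\nu(\tau,x-v(t-\tau))\leq C_\beta(\nu)\|wF(\tau)\|_{L^\infty}\leq 2C_\beta(\nu)\|wF_0\|_{L^\infty}.
\]
Integrating over $[0,t]$ with $t\leq t_1=(4C_\beta(\nu)\|wF_0\|_{L^\infty})^{-1}$ gives $\int_0^t A_\nu\,d\tau\leq 1/2$. Consequently $F(t,x,v)\geq e^{-1/2}F_0(x-vt,v)$, and integrating in $v$ together with the hypothesis \eqref{1.17} yields $\rho(t,x)\geq e^{-1/2}C_0$.

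For the lower bound on $T$, I would apply \eqref{2.1} from Lemma \ref{lem2.1}: $\rho/T^{3/2}\leq CN_0(F)\leq C\|wF(t)\|_{L^\infty}\leq 2C\|wF_0\|_{L^\infty}$. Combined with the lower bound on $\rho$ just obtained, this gives $T^{3/2}\geq e^{-1/2}C_0/(2C\|wF_0\|_{L^\infty})$, hence a positive lower bound for $T$ depending only on $C_0,\nu,\beta,\|wF_0\|_{L^\infty}$. The upper bounds on $T$ and $|u|$ then follow by dividing the previously obtained bounds on $\rho T$ and $\rho|u|^2$ by the lower bound on $\rho$. All ingredients are essentially routine once Lemma \ref{lem3.1} is in hand; the only delicate point is verifying that the choice of $t_1$ from Lemma \ref{lem3.1} makes the exponential factor in the mild form harmless, which is why the lower bound on $\rho$ (and consequently on $T$) holds on exactly the same interval $[0,t_1]$.
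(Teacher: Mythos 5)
Your proposal is correct and follows essentially the same route as the paper: upper bounds on $\rho$, $\rho u$ and $\rho|u|^2+3\rho T$ from Lemma \ref{lem3.1} with $\beta>5$, a lower bound on $\rho$ by dropping the nonnegative gain term in \eqref{3.1} and showing the exponential damping factor is bounded below on $[0,t_1]$ (your explicit computation $\int_0^t A_\nu\,d\tau\le 1/2$ is just a cleaner bookkeeping of the same estimate), and finally the lower bound on $T$ via $\rho/T^{3/2}\le CN_0(F)$ from \eqref{2.1}. No gaps.
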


\begin{proof}
First notice by Lemma \ref{lem3.1} that for $0\leq t\leq t_1$ and $x\in \Omega$,
\begin{align}
\r(t,x)&=\int_{\mathbb{R}^3}F(t,x,v)dv\leq C\|w F(t)\|_{L^\infty}\leq C\|w F_0\|_{L^\infty},\label{3.16}\\
|(\r u)(t,x)|&=\Big|\int_{\mathbb{R}^3}vF(t,x,v)dv\Big|\leq C\|w F(t)\|_{L^\infty}\leq C\|w F_0\|_{L^\infty},\label{3.18}\\
 (
 \r|u|^2+3\r T)(t,x)&=\int_{\mathbb{R}^3}
 |v|^2F(t,x,v)dv\leq C\|w F(t)\|_{L^\infty}\leq C\|w F_0\|_{L^\infty},
\label{3.17}
\end{align}
where $\beta>5$ has been used. For the lower bound of density $\r(t,x)$, it follows from \eqref{3.1} and \eqref{3.17} that 
\begin{align}\label{3.19}
\r(t,x)&=\int_{\mathbb{R}^3}F(t,x,v)dv\geq\int_{\mathbb{R}^3}e^{-\int_0^tA_{\n}(\tau,x-v(t-\tau))d\tau}F_0(x-vt,v)dv\nonumber\\
&\geq \exp{\left(-\int_0^t\f{\|(\r T)(s)\|_{L^\infty}}{1-\n}ds\right)}\int_{\mathbb{R}^3}F_0(x-vt,v)dv\nonumber\\
&\geq C_0 \exp{\left(-\int_0^t\f{\|w F_0\|_{L^\infty}}{C(1-\n)}ds\right)}\geq C_0 \exp{\left(-\f{t_1\|w F_0\|_{L^\infty}}{C(1-\n)}\right)}\nonumber\\
&=C_0 \exp{\left(-\f{1}{4C C_\b(\n)(1-\n)}\right)}.
\end{align}
Furthermore, it follows from \eqref{2.1}   and \eqref{3.16}, \eqref{3.18}, \eqref{3.17}, \eqref{3.19} that 
\begin{align*}
T(t,x)+|u(t,x)|\leq C\|w F_0\|_{L^\infty}C_0^{-1} \exp{\left(\f{1}{4C C_\b(\n)(1-\n)}\right)},
\end{align*}
and 
\begin{align*}
T(t,x)\geq \left(\f{\r(t,x)}{C\|w F_0\|_{L^\infty}}\right)^{\f23}\geq \left(\f{C_0}{C\|w F_0\|_{L^\infty}} \exp{\left(-\f{1}{4C C_\b(\n)(1-\n)}\right)}\right)^{\f23}.
\end{align*}
Therefore those estimates in \eqref{3.13} follow by defining 
\begin{align*}
C_1&\triangleq \max\Big\{C\|w F_0\|_{L^\infty}, ~~ \Big(\f{C_0}{C\|w F_0\|_{L^\infty}} \exp{\Big(-\f{1}{4C C_\b(\n)(1-\n)}\Big)}\Big)^{-\f23},\nonumber\\
 &~~~~~~~~~~~~~C_0^{-1}\exp{\Big(\f{1}{4C C_\b(\n)(1-\n)}}\Big), ~~C\|w F_0\|_{L^\infty}C_0^{-1} \exp{\Big(\f{1}{4C C_\b(\n)(1-\n)}\Big)}\Big\}.
\end{align*}
The proof of Lemma \ref{lem3.2} is complete.
\end{proof}

\section{Global Estimates}

In this section, 
we consider the global-in-time estimates on the solution $F(t,x,v)$ to the Cauchy problem \eqref{1.1}, \eqref{1.6} under the following a priori assumptions:
\begin{equation}\label{4.1}
(2C_1)^{-1}\leq \r(t,x),~T(t,x)\leq 2C_1,~~\mbox{and}~~|u(t,x)|\leq 2C_1,
\end{equation}
for all $t\geq t_1$ and $x\in \Omega$,  where $t_1>0$ and $C_1>0$ are respectively given in Lemma \ref{lem3.1} and Lemma \ref{lem3.2}. First of all, we have

\begin{lemma}\label{lem4.1}
Let $w(v)=(1+|v|)^\beta$ with $\beta>5$. Under the assumption \eqref{4.1}, it holds that 
\begin{align}\label{4.2}
\|w F(t)\|_{L^\infty}\leq \|w F_0\|_{L^\infty}+C(\n)C_1^{\f{13}2},
\end{align}
for all $t\geq 0$, where $C(\nu)>0$ is a constant depending only on $\n$.
\end{lemma}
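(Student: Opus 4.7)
The plan is to follow the mild-formulation strategy of Lemma \ref{lem3.1}, but to replace the self-referential bound by $\|wF(s)\|_{L^\infty}$ on the right-hand side by the macroscopic control furnished by \eqref{4.1} together with Lemma \ref{lem3.2}. Multiplying \eqref{3.1} by $w(v)$ and using $|w(v)F_0(x-vt,v)|\leq \|wF_0\|_{L^\infty}$ on the free-transport term gives
\begin{equation*}
|w(v)F(t,x,v)|\;\leq\; \|wF_0\|_{L^\infty} \;+\; \int_0^t e^{-\int_s^t A_\nu\,d\tau}\,A_\nu(s,y)\,w(v)M_\nu(s,y,v)\,ds,
\end{equation*}
with $y=x-v(t-s)$. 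The crucial observation is the absorption identity $\int_0^t A_\nu(s,y)\,e^{-\int_s^t A_\nu(\tau,\cdot)d\tau}\,ds = 1-e^{-\int_0^t A_\nu\,d\tau}\leq 1$, which lets me dominate the collision integral by $\sup_{s\in[0,t],\,y\in\Omega}\,w(v)M_\nu(s,y,v)$ and thereby removes any need to use $\|wF(s)\|_{L^\infty}$ on the right.

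The second step is to bound $w(v)M_\nu(s,y,v)$ pointwise in terms of $\rho$, $T$, $u$ alone. By Corollary \ref{cor2.3}, $M_\nu\leq C(\nu)\,\tfrac{\rho}{T^{3/2}}\exp\bigl(-\tfrac{|v-u|^2}{C(\nu)T}\bigr)$. Exactly as in the derivation of \eqref{3.6}, using the splitting $(1+|v|)^\beta\leq C_\beta[(1+|u|)^\beta+|v-u|^\beta]$ and the Gaussian moment bound $\sup_v|v-u|^\beta e^{-|v-u|^2/(C(\nu)T)}\leq C_\beta(\nu)T^{\beta/2}$ yields
\begin{equation*}
w(v)M_\nu(s,y,v)\;\leq\; C_\beta(\nu)\Big[\tfrac{\rho(s,y)}{T(s,y)^{3/2}}(1+|u(s,y)|)^\beta \;+\; \rho(s,y)\,T(s,y)^{(\beta-3)/2}\Big].
\end{equation*}

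Finally, I feed in the uniform macroscopic bounds: for $s\in[t_1,t]$ the a priori assumption \eqref{4.1} provides $(2C_1)^{-1}\leq \rho, T\leq 2C_1$ and $|u|\leq 2C_1$, while for $s\in[0,t_1]$ Lemma \ref{lem3.2} gives the same type of control; in both ranges $\rho,T,|u|\lesssim C_1$ and $T^{-1}\lesssim C_1$ hold uniformly in $y$. Each factor in the displayed bound then reduces to a polynomial power of $C_1$, and collecting the dominant contribution (with the $\beta$-dependent constants absorbed into $C(\nu)$, since $\beta$ is fixed) produces the asserted estimate $C(\nu)C_1^{13/2}$.

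The only delicate point is the bookkeeping in this last step: one has to decide which of the two contributions $\tfrac{\rho}{T^{3/2}}(1+|u|)^\beta$ and $\rho T^{(\beta-3)/2}$ controls the other under the macroscopic bounds, and to verify that the worst resulting exponent of $C_1$ is indeed $13/2$. Apart from this, the argument is a direct transcription of the proof of Lemma \ref{lem3.1}, with the macroscopic bounds coming from \eqref{4.1} and Lemma \ref{lem3.2} playing the role that $\|wF(s)\|_{L^\infty}^2$ played there.
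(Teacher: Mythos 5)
Your argument is sound and reaches the right kind of bound, but it diverges from the paper's proof in one genuinely interesting way and falls short of the stated constant in another. The paper does not use your absorption identity; instead it invokes the a priori bounds \eqref{4.1} (together with Lemma \ref{lem3.2} on $[0,t_1]$) \emph{twice} on $A_\nu=\rho T/(1-\nu)$: the upper bound $A_\nu\le 4C_1^2/(1-\nu)$ to control the prefactor, and the lower bound $A_\nu\ge \bigl(4C_1^2(1-\nu)\bigr)^{-1}$ to get the damping $e^{-\frac{t-s}{4C_1^2(1-\nu)}}$, whose time integral contributes another factor $C_1^2$. Your identity
\begin{equation*}
\int_0^t A_\nu(s,x-v(t-s))\,e^{-\int_s^t A_\nu(\tau,x-v(t-\tau))\,d\tau}\,ds=1-e^{-\int_0^t A_\nu\,d\tau}\le 1
\end{equation*}
is correct (it is the $s$-derivative of the exponential) and is cleaner: it needs no lower bound on $\rho T$ and loses no powers of $C_1$ from the time integration. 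So your route buys a simpler and in fact sharper estimate of the Duhamel integral.

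The place where you correctly sense trouble is the exponent. Carrying the weight honestly as you do, via $(1+|v|)^\beta\le C_\beta[(1+|u|)^\beta+|v-u|^\beta]$ and the Gaussian moment bound, gives $w(v)M_\nu\le C_\beta(\nu)\bigl[\rho T^{-3/2}(1+|u|)^\beta+\rho T^{(\beta-3)/2}\bigr]\le C_\beta(\nu)C_1^{\beta+5/2}$ under \eqref{4.1}, and since $\beta>5$ this exceeds $C_1^{13/2}$; your method therefore proves \eqref{4.2} with $C_\beta(\nu)C_1^{\beta+5/2}$ in place of $C(\nu)C_1^{13/2}$, and no amount of ``deciding which term dominates'' will bring it down to $13/2$. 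The paper arrives at $13/2$ only because its display bounds the integrand by $\frac{4C_1^2}{1-\nu}M_\nu$ with $M_\nu\le C(\nu)\rho T^{-3/2}\le C(\nu)C_1^{5/2}$, i.e.\ the factor $w(v)$ acting on $M_\nu$ is absorbed into the Gaussian without being tracked; done carefully that absorption also costs $\beta$- and $C_1$-dependent constants. Since $C_1$ already depends on $\beta$, $\nu$, $C_0$ and $\|wF_0\|_{L^\infty}$, either form of the constant serves the proof of Theorem \ref{thm1.1} equally well, so your proof is acceptable provided you state the bound you actually obtain rather than the exponent $13/2$.
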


\begin{proof}   
It follows from \eqref{2.1}, \eqref{3.5} and \eqref{4.1} that 
\begin{align*}
|w F(t,x,v)|&\leq \|w F_0\|_{L^\infty}+\int_0^t e^{-\f{t-s}{4C^2_1(1-\nu)}} \f{4C_1^2}{1-\nu}M_\nu
(s,x-v(t-s),v)ds\nonumber\\
&\leq \|w F_0\|_{L^\infty}+C(\n)\int_0^t\f{4C_1^2}{1-\nu} e^{-\f{t-s}{4C^2_1(1-\nu)}} \f{\r}{T^{\f32}}ds\nonumber\\
&\leq \|w F_0\|_{L^\infty}+C(\n)C_1^{\f92}\int_0^t\f{1}{1-\nu} e^{-\f{t-s}{4C_1^2(1-\nu)}} ds\nonumber\\
&\leq \|w F_0\|_{L^\infty}+C(\n)C_1^{\f{13}2},
\end{align*}
which gives \eqref{4.2}. 
The proof of Lemma \ref{lem4.1} is complete. 
\end{proof}

To close the a priori assumption \eqref{4.1}, we need the following 
lemma whose proof is based on the reformulated mild form by \eqref{3.1}:
\begin{eqnarray}
F(t,x,v)-\mu&=&[F_0(x-vt,v)-\mu] e^{-\int_0^tA_{\n}(\tau,x-v(t-\tau))d\tau}\notag\\
&&+\int_0^t e^{-\int_s^tA_{\n}(\tau,x-v(t-\tau))d\tau}A_{\n}(s,x-v(t-s))\notag\\
&&\qquad\qquad\qquad\qquad\times [M_{\n}
(s,x-v(t-s),v)-\mu]ds. \label{4.4}
\end{eqnarray}

\begin{lemma}\label{lem4.2}
Let $w(v)=(1+|v|)^\beta$ with $\beta>7$.
It holds that
\begin{align}\label{4.5}
&\sup_{x\in\Omega}\Big|\int_{\mathbb{R}^3}(1,v,|v|^2, v\otimes v)[F(t,x,v)-\mu(v)]dv\Big|\nonumber\\
&\leq e^{-\f{t}{1-\nu}}\sup_{x\in\Omega}\Big|\int_{\mathbb{R}^3}(1,v,|v|^2, v\otimes v)[F_0(x-vt,v)-\mu(v)]dv \Big|\nonumber\\
&~~~+C\Big\{\sqrt{\mathcal{E}(F_0)}+\mathcal{E}(F_0)^{\f{\b-5}{2\b-7}}\Big\},
\end{align}
for all $t\geq 0$,  
where $C\geq1$ is a constant depending only on $C_0, \nu$, $\beta$ and $\|w F_0\|_{L^\infty}$.
\end{lemma}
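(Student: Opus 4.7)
The plan is to recast the mild form \eqref{4.4} so that the explicit damping rate $\f{1}{1-\n}$ is factored out, and then bound the resulting Duhamel term by combining the entropy estimate of Proposition \ref{prop2.4} with the $L^\infty$ weighted bound of Lemma \ref{lem4.1}. Writing $A_\n = \f{1}{1-\n} + \f{\r T - 1}{1-\n}$, the equation for $h := F - \mu$ becomes
\begin{equation*}
h_t + v\cdot\nabla_x h + \f{h}{1-\n} = \f{1-\r T}{1-\n}\,h + A_\n(M_\n - \mu),
\end{equation*}
whose Duhamel form reads
\begin{equation*}
h(t,x,v) = h_0(x-vt,v)e^{-\f{t}{1-\n}} + \int_0^t e^{-\f{t-s}{1-\n}}\Big\{\f{1-\r T}{1-\n}h + A_\n(M_\n-\mu)\Big\}(s,x-v(t-s),v)\,ds.
\end{equation*}
After integrating against $g(v)\in\{1,v,|v|^2,v\otimes v\}$ and taking $\sup_x$, the boundary term at $s=0$ reproduces exactly the first piece on the right of \eqref{4.5}; the task reduces to bounding the Duhamel contribution by $C\{\sqrt{\mathcal{E}(F_0)}+\mathcal{E}(F_0)^{(\b-5)/(2\b-7)}\}$.

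For the Duhamel integrand, a mean-value expansion of the anisotropic Gaussian in its parameters $(\r-1,u,T-1,\Theta-\mathrm{Id})$ (all uniformly bounded under \eqref{4.1} by Corollary \ref{cor2.3}) yields
\begin{equation*}
|M_\n(s,y,v)-\mu(v)|\leq C\,\mu^{1/2}(v)\,\big\{|\r(s,y)-1|+|u(s,y)|+|T(s,y)-1|+|\Theta(s,y)-\mathrm{Id}|\big\},
\end{equation*}
so that $\int A_\n|M_\n-\mu|(1+|v|)^k\,dv$ is controlled by the very moment vector $\int(1,v,|v|^2,v\otimes v)\,h(s,y,\cdot)\,dv$ that we seek to estimate. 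The linear piece $\f{1-\r T}{1-\n}h$ is treated identically, using $|1-\r T|\leq C$ from \eqref{4.1}. Closing the estimate therefore reduces to bounding polynomial moments of $h$ of order $k\leq 2$ in terms of powers of $\mathcal{E}(F_0)$.

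For that moment estimate I would follow Proposition \ref{prop2.4} and decompose $h = h_1 + h_2$ with $h_1 = h\,\mathbbm{1}_{\{|h|\leq\mu\}}$ and $h_2 = h\,\mathbbm{1}_{\{|h|\geq\mu\}}$. Cauchy--Schwarz applied to $\int|h_1|(1+|v|)^k\,dv \leq \big(\int(1+|v|)^{2k}\mu\,dv\big)^{1/2}\big(\int |h_1|^2/\mu\,dv\big)^{1/2}$ yields the $\sqrt{\mathcal{E}(F_0)}$ factor. For the $h_2$-piece I would split the $v$-integral at $|v|=R$: the low-velocity part is bounded by $CR^k\|h_2\|_{L^1_v}\leq CR^k\mathcal{E}(F_0)$, while the high-velocity part is controlled by $CR^{k+3-\b}\|wF\|_{L^\infty}$ via Lemma \ref{lem4.1} (finite thanks to $\b>7$). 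Optimizing $R$ in the balance $R^k\mathcal{E}\sim R^{k+3-\b}$ and reconciling with the time integration $\int_0^t e^{-(t-s)/(1-\n)}\,ds\leq 1-\n$ produces the exponent $(\b-5)/(2\b-7)$ recorded in \eqref{4.5}; a Gronwall closure then finishes the proof.

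The principal obstacle is the passage from the space-integrated bounds supplied by Proposition \ref{prop2.4} to the uniform-in-$x$ statement of \eqref{4.5}: pointwise in $x$ and at fixed $s$, neither $\int |h_1|^2/\mu\,dv$ nor $\int|h_2|\,dv$ is directly majorized by $\mathcal{E}(F_0)$. The proof must exploit the Duhamel structure, in which the source is evaluated at the shifted point $y=x-v(t-s)$: combining Cauchy--Schwarz in $s$ with the change of variables $v\mapsto y$ (Jacobian $(t-s)^{-3}$) converts the pointwise-in-$x$ moment estimate into a global $L^2_{x,v}$/$L^1_{x,v}$ estimate to which Proposition \ref{prop2.4} applies, with the exponential factor $e^{-(t-s)/(1-\n)}$ providing the necessary integrability near $s=t$ to tame the Jacobian. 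Implementing this interchange between physical and velocity variables, while controlling the singularity of the Jacobian by absorbing it into the decaying exponential and the velocity tail decay of Lemma \ref{lem4.1}, is the most delicate part of the argument.
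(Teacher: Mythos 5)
Your overall architecture --- Duhamel form with the rate $\tfrac{1}{1-\nu}$ extracted so that the initial-data term carries $e^{-t/(1-\nu)}$, the bound $|M_\nu-\mu|\le Ce^{-|v|^2/C}\,|\int(1,\eta,|\eta|^2,\eta\otimes\eta)[F-\mu]\,d\eta|$, and closure via the change of variables $v\mapsto y=x-v(t-s)$ combined with Proposition \ref{prop2.4} and Lemma \ref{lem4.1} --- is the same as the paper's. However, two steps as you describe them do not work. First, the Jacobian singularity near $s=t$: after Cauchy--Schwarz the change of variables produces a factor $(t-s)^{-3/2}$, and $\int_0^t e^{-(t-s)/(1-\nu)}(t-s)^{-3/2}\,ds$ diverges, since the exponential is bounded below near $s=t$; neither the decaying exponential nor ``Cauchy--Schwarz in $s$'' can tame a non-integrable singularity. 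The device you are missing is the paper's splitting of the time integral at $t-\lambda$: on $[t-\lambda,t]$ one does \emph{not} change variables but bounds the contribution by $C\lambda\,\sup_{t-\lambda\le s\le t,\,y}\big|\int(1,\eta,|\eta|^2,\eta\otimes\eta)[F-\mu]\,d\eta\big|$, which is absorbed into the left-hand side by choosing $C\lambda\le\tfrac12$ (this, not Gronwall, is what yields a time-uniform constant), while on $[0,t-\lambda]$ the Jacobian is simply bounded by $\lambda^{-3}$.

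Second, the exponent $(\beta-5)/(2\beta-7)$. Your cutoff at $|v|=R$ is placed in the inner moment variable, and your balance $R^{k}\mathcal{E}\sim R^{k+3-\beta}$ gives $\mathcal{E}^{(\beta-3-k)/(\beta-3)}$, which is not $(\beta-5)/(2\beta-7)$; moreover the ingredient $\|h_2\|_{L^1_v}\le C\mathcal{E}(F_0)$ at fixed $(s,y)$ is precisely the pointwise-in-$x$ use of the entropy bound that you yourself identify as illegitimate. In the paper the exponent arises only in the torus case and from a cutoff in the \emph{outer} Duhamel variable $v$: on $\mathbb{T}^3$ the map $v\mapsto x-v(t-s)$ covers the torus with multiplicity of order $(1+N(t-s))^3$ on $\{|v|\le N\}$, costing a factor $N^{3/2}$ in front of $\sqrt{\mathcal{E}(F_0)}$ after Cauchy--Schwarz, while on $\{|v|\ge N\}$ the inner $\eta$-integral is bounded crudely by the weighted $L^\infty$ norm from Lemma \ref{lem4.1}, leaving a tail $N^{-\beta+5}$; balancing $N^{-\beta+5}=N^{3/2}\sqrt{\mathcal{E}(F_0)}$ produces $(\beta-5)/(2\beta-7)$. (On $\mathbb{R}^3$ no cutoff is needed and the contribution is just $C\sqrt{\mathcal{E}(F_0)}$.) Without these two repairs the argument does not close.
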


\begin{proof}
In fact, \eqref{4.4} gives
\begin{align}\label{4.6}
&\Big|\int_{\mathbb{R}^3}[F(t,x,v)-\mu(v)]dv\Big|\nonumber\\
&\leq \Big|\int_{\mathbb{R}^3}[F_0(x-vt,v)-\mu(v)]\cdot \exp{\Big(-\int_0^tA_{\n}(\tau,y)d\tau\Big)}dv\Big|\nonumber\\
&\quad+\int_0^te^{-\f{t-s}{4C^2_1(1-\nu)}}\f{4C_1^2}{1-\nu}\int_{\mathbb{R}^3}(1+|v|^2)|M_\n(F)(s,y,v)-\mu(v)|dvds,
\end{align}
with $y:=x-v(t-s)$.  Notice that 
\begin{align}
&[F_0(x-vt,v)-\mu(v)]\cdot \exp{\Big(-\int_0^tA_{\n}(\tau,y)d\tau\Big)}\nonumber\\
&=[F_0(x-vt,v)-\mu(v)]\cdot \exp{\Big(-\int_0^t\f{1}{1-\nu}d\tau\Big)}\nonumber\\
&\quad+[F_0(x-vt,v)-\mu(v)]\cdot\left\{\exp{\Big(-\int_0^tA_{\n}(\tau,y)d\tau\Big)}- \exp{\Big(-\int_0^t\f{1}{1-\nu}d\tau\Big)}\right\}.\nonumber
\end{align}
This implies that 
\begin{align}\label{4.6-2}
&\Big|\int_{\mathbb{R}^3}[F_0(x-vt,v)-\mu(v)]\cdot \exp{\Big(-\int_0^tA_{\n}(\tau,y)d\tau\Big)}dv\Big|\nonumber\\
&\leq e^{-\f{t}{1-\nu}}\Big|\int_{\mathbb{R}^3}[F_0(x-vt,v)-\mu(v)]dv \Big|\nonumber\\
&~~~~+\f{C}{1-\nu}e^{-\f{t}{4C^2_1(1-\nu)}}(1+\|w F_0\|_{L^\infty})\int_0^t \int_{\mathbb{R}^3}w(v)^{-1}|(\r T)(\tau,y)-1|dvd\tau\nonumber\\
&\leq e^{-\f{t}{1-\nu}}\Big|\int_{\mathbb{R}^3}[F_0(x-vt,v)-\mu(v)]dv \Big|\nonumber\\
&~~~~+\f{C}{1-\nu}e^{-\f{t}{4C^2_1(1-\nu)}}\int_0^t \int_{\mathbb{R}^3}w(v)^{-1}\left|\int_{\mathbb{R}^3}(1,\eta, |\eta|^2)[F(s,y,\eta)-\mu(\eta)]d\eta\right|dvds.
\end{align}
It is direct to further compute
\begin{align}
&|M_\nu
(s,y,v)-\mu|\notag\\
&\leq C\Big(|\r(s,y)-1|+|u(s,y)|+|\mathcal{T}_\nu(s,y)-\mathrm{Id}|\Big)e^{-\f{|v|^2}{C}}\nonumber\\
&\leq C\Big(|\r(s,y)-1|+|(\r u)(s,y)|+|\r\mathcal{T}_\nu(s,y)-\mathrm{Id}|\Big)e^{-\f{|v|^2}{C}}.\label{4.7}
\end{align}
Recall \eqref{1.3}. One then can write
\begin{align}\label{4.8}
\r\mathcal{T}_\nu&=(1-\nu)\r T Id+\nu \r \Theta\nonumber\\
&=\f{1-\nu}{3}\int_{\mathbb{R}^3}|\eta-u|^2[F(s,y,\eta)-\mu]d\eta\notag\\
&\qquad+\nu\int_{\mathbb{R}^3}(\eta-u)\otimes(\eta-u) [F(s,y,\eta)-\mu]d\eta\nonumber\\
&\qquad+\f{1-\nu}{3}\int_{\mathbb{R}^3}|\eta-u|^2\mu d\eta+\nu\int_{\mathbb{R}^3}(\eta-u)\otimes(\eta-u) \mu d\eta
\nonumber\\
&=\f{1-\nu}{3}\int_{\mathbb{R}^3}|\eta-u|^2[F(s,y,\eta)-\mu]d\eta\notag\\
&\qquad+\nu\int_{\mathbb{R}^3}(\eta-u)\otimes(\eta-u) [F(s,y,\eta)-\mu]d\eta\nonumber\\
&\qquad+Id+\f{1-\nu}{3} |u|^2Id+\nu u\otimes u,
\end{align}
where we have denoted $u=u(s,y)$ on the right. Using \eqref{4.8}, it follows from \eqref{4.7} that
\begin{align}
|M_\nu(F)(s,y,v)-\mu|
&\leq C\Big(|\r(s,y)-1|+|(\r u)(s,y)|\Big)\cdot e^{-\f{|v|^2}{C}}\nonumber\\
&\quad+Ce^{-\f{|v|^2}{C}}\Big|\int_{\mathbb{R}^3}(1,\eta, |\eta|^2,\eta\otimes\eta)\{F(s,y,\eta)-\mu\}d\eta\Big| \nonumber\\
&\leq Ce^{-\f{|v|^2}{C}}\Big|\int_{\mathbb{R}^3}(1,\eta, |\eta|^2,\eta\otimes\eta)[F(s,y,\eta)-\mu]d\eta\Big|,\nonumber
\end{align}
which together with \eqref{4.6} and \eqref{4.6-2},  yield that
\begin{align}
&\Big|\int_{\mathbb{R}^3}[F(t,x,v)-\mu(v)]dv\Big|\nonumber\\
&\leq C\int_0^te^{-\f{t-s}{4C^2_1(1-\nu)}}\f{4C_1^2}{1-\nu} \int_{\mathbb{R}^3}w(v)^{-1}\left|\int_{\mathbb{R}^3}(1,\eta, |\eta|^2,\eta\otimes\eta)[F(s,y,\eta)-\mu(\eta)]d\eta\right|dvd\tau \nonumber\\
&\qquad+e^{-\f{t}{1-\nu}}\Big|\int_{\mathbb{R}^3}[F_0(x-vt,v)-\mu(v)]dv \Big|.\nonumber
\end{align}
By similar arguments as above, one can obtain that 
\begin{align}\label{4.10}
&\Big|\int_{\mathbb{R}^3}(1,v,|v|^2, v\otimes v)[F(t,x,v)-\mu(v)]dv\Big|\nonumber\\
&\leq \int_0^tCe^{-\f{t-s}{4C^2_1(1-\nu)}}\f{4C_1^2}{1-\nu} \int_{\mathbb{R}^3}w(v)^{-1}|v|^2\left|\int_{\mathbb{R}^3}(1,\eta, |\eta|^2,\eta\otimes\eta)[F(s,y,\eta)-\mu]d\eta\right|dvd\tau \nonumber\\
&\qquad+e^{-\f{t}{1-\nu}}\Big|\int_{\mathbb{R}^3}(1,v,|v|^2, v\otimes v)[F_0(x-vt,v)-\mu(v)]dv \Big|\nonumber\\
&=: e^{-\f{t}{1-\nu}}\Big|\int_{\mathbb{R}^3}(1,v,|v|^2, v\otimes v)[F_0(x-vt,v)-\mu(v)]dv \Big|\nonumber\\
&\qquad+\int_{t-\l}^t(\cdots)d\tau+\int_0^{t-\l}(\cdots)d\tau,
\end{align}
where $\l>0$ is a small constant to  
be chosen later. It remains to  
estimate the  second and third terms on the right of \eqref{4.10}. For the second term, we have 
\begin{align}\label{4.11}
\int_{t-\l}^t(\cdots)ds\leq C\l\sup_{y\in\mathbb{R}^3,t-\l\leq s\leq t }\left|\int_{\mathbb{R}^3}(1,\eta, |\eta|^2,\eta\otimes\eta)[F(s,y,\eta)-\mu]d\eta\right|.
\end{align}
Now we estimate the third term on the right of \eqref{4.10}.
For the case $\Omega=\mathbb{R}^3$, we divide the integral into two parts:
\begin{align}\label{4.12-3}
&\int_{\mathbb{R}^3}w(v)^{-1}|v|^2\left|\int_{\mathbb{R}^3}(1,\eta, |\eta|^2,\eta\otimes\eta)[F(s,y,\eta)-\mu]d\eta\right|dv\nonumber\\
&\leq\int_{\mathbb{R}^3}\int_{\mathbb{R}^3}(1+|v|)^{-\b+2}(1+|\eta|^2)|[F(s,y,\eta)-\mu]|\cdot I_{\{|F(s,y,v)-\mu|\leq\mu\}}d\eta dv\nonumber\\
&\qquad{\small+\int_{\mathbb{R}^3}\int_{\mathbb{R}^3}(1+|v|)^{-\b+2}(1+|\eta|^2)|[F(s,y,\eta)-\mu]| \cdot I_{\{|F(s,y,v)-\mu|\geq\mu\}}d\eta dv}\nonumber\\
&=:J_1+J_2.
\end{align}
For terms on the right of \eqref{4.12-3}, we have, for $\b>7$,  that 
\begin{align}\label{4.12-4}
J_1&\leq C\Big(\int_{\mathbb{R}^3}\int_{\mathbb{R}^3}(1+|v|)^{-2\b+4}e^{-\f12|\eta|^2}(1+|\eta|^2)^2d\eta dv\Big)^{\f12}\nonumber\\
&\qquad\times\Big(\int_{\mathbb{R}^3}\int_{\mathbb{R}^3}\f{|[F(s,y,\eta)-\mu]|^2}{\mu(\eta)} I_{\{|F(s,y,v)-\mu|\leq\mu\}}d\eta dv\Big)^{\f12}\nonumber\\
&\leq C\Big(\int_{\mathbb{R}^3}\int_{\mathbb{R}^3}\f{|[F(s,y,\eta)-\mu]|^2}{\mu(\eta)} I_{\{|F(s,y,v)-\mu|\leq\mu\}}d\eta dv\Big)^{\f12}\nonumber\\
&\leq \f{C}{(t-s)^{\f32}}\Big(\int_{\mathbb{R}^3}\int_{\mathbb{R}^3}\f{|[F(s,y,\eta)-\mu]|^2}{\mu(\eta)} I_{\{|F(s,y,v)-\mu|\leq\mu\}}d\eta dy\Big)^{\f12}\nonumber\\
&\leq \f{C}{(t-s)^{\f32}} \sqrt{\mathcal{E}(F_0)},
\end{align}
and
\begin{align}\label{4.12}
J_2&\leq C\Big(\int_{\mathbb{R}^3}\int_{\mathbb{R}^3}(1+|v|)^{-2\b+4}(1+|\eta|)^{4-\b}[1+\|w(\eta)F(s,y,\eta)\|_{L^\infty}]d\eta dv\Big)^{\f12}\nonumber\\
&\qquad\times\Big(\int_{\mathbb{R}^3}\int_{\mathbb{R}^3}|[F(s,y,\eta)-\mu]| I_{\{|F(s,y,v)-\mu|\geq\mu\}}d\eta dv\Big)^{\f12}\nonumber\\
&\leq C[1+\|w F(s)\|_{L^\infty}]^{\f12}\Big(\int_{\mathbb{R}^3}\int_{\mathbb{R}^3}|[F(s,y,\eta)-\mu]| I_{\{|F(s,y,v)-\mu|\geq\mu\}}d\eta dv\Big)^{\f12}\nonumber\\
&\leq \f{C}{(t-s)^{\f32}}[1+\|w F(s)\|_{L^\infty}]^{\f12}\Big(\int_{\mathbb{R}^3}\int_{\mathbb{R}^3}|[F(s,y,\eta)-\mu]| I_{\{|F(s,y,v)-\mu|\geq\mu\}}d\eta dy\Big)^{\f12}\nonumber\\
&\leq \f{C}{(t-s)^{\f32}}[1+\|w F(s)\|^{\f12}_{L^\infty}]\sqrt{\mathcal{E}(F_0)}\leq \f{C}{(t-s)^{\f32}}\sqrt{\mathcal{E}(F_0)},
\end{align}
where we have made a change of variable $v\rightarrow y=x-v(t-s)$ and used \eqref{4.2}, Proposition \ref{prop2.4} in \eqref{4.12-4} and \eqref{4.12}. Thus, it follows from \eqref{4.12-3}-\eqref{4.12}, for $\Omega=\mathbb{R}^3$, that 
\begin{align}\label{4.12-5}
\int_{\mathbb{R}^3}w(v)^{-1}|v|^2\left|\int_{\mathbb{R}^3}(1,\eta, |\eta|^2,\eta\otimes\eta)[F(s,y,\eta)-\mu]d\eta\right|dv\leq \f{C\sqrt{\mathcal{E}(F_0)}}{(t-s)^{\f32}}.
\end{align}
For the case $\Omega=\mathbb{T}^3$, we divide the integral into three parts:
\begin{align}\label{4.15}
&\int_{\mathbb{R}^3}w(v)^{-1}|v|^2\left|\int_{\mathbb{R}^3}(1,\eta, |\eta|^2,\eta\otimes\eta)[F(s,x-v(t-s),\eta)-\mu(\eta)]d\eta\right|dv\nonumber\\
&\leq \int_{|v|\geq N}\int_{\mathbb{R}^3}(1+|v|)^{-\b+2}(1+|\eta|^2)|[F(s,y,\eta)-\mu]|d\eta dv\nonumber\\
&\quad+\int_{|v|\leq N}\int_{\mathbb{R}^3}(1+|v|)^{-\b+2}(1+|\eta|^2)|[F(s,y,\eta)-\mu]|\cdot I_{\{|F(s,y,v)-\mu|\leq\mu\}}d\eta dv\nonumber\\
&\quad+\int_{|v|\leq N}\int_{\mathbb{R}^3}(1+|v|)^{-\b+2}(1+|\eta|^2)|[F(s,y,\eta)-\mu]| \cdot I_{\{|F(s,y,v)-\mu|\geq\mu\}}d\eta dv\nonumber\\
&=:J_3+J_4+J_5,
\end{align}
where $N>0$ is a large number to be chosen later. 
For terms on the right of \eqref{4.15}, one obtains that 
 \begin{align}\label{4.15-2}
J_3&\leq \int_{|v|\geq N}\int_{\mathbb{R}^3}(1+|v|)^{-\b+2}(1+|\eta|)^{-\beta+2}(1+\|w(\eta)F(s,y,\eta)\|_{L^\infty})d\eta dv\nonumber\\
&\leq C\sup_{0\leq s\leq t}(1+\|w(\eta)F(s,y,\eta)\|_{L^\infty})\int_{|v|\geq N}(1+|v|)^{-\b+2}dv\nonumber\\
&\leq C\int_{|v|\geq N}(1+|v|)^{-\b+2}dv\leq C N^{-\b+5},
\end{align}
\begin{align}\label{4.12-1}
J_4&\leq
 C\Big(\int_{|v|\leq N}\int_{\mathbb{R}^3}\f{|[F(s,y,\eta)-\mu]|^2}{\mu(\eta)} I_{\{|F(s,y,v)-\mu|\leq\mu\}}d\eta dv\Big)^{\f12}\nonumber\\
&\leq CN^{\f32}\Big(\int_{\mathbb{T}^3}\int_{\mathbb{R}^3}\f{|[F(s,y,\eta)-\mu]|^2}{\mu(\eta)} I_{\{|F(s,y,v)-\mu|\leq\mu\}}d\eta dy\Big)^{\f12}\nonumber\\
&\leq CN^{\f32}\sqrt{\mathcal{E}(F_0)},
\end{align}
and
\begin{align}\label{4.16}
J_5&\leq C[1+\|w F(s)\|_{L^\infty}]^{\f12}\Big(\int_{|v|\leq N}\int_{\mathbb{R}^3}|[F(s,y,\eta)-\mu]| I_{\{|F(s,y,v)-\mu|\geq\mu\}}d\eta dv\Big)^{\f12}\nonumber\\
&\leq CN^{\f32}\Big(\int_{\mathbb{T}^3}\int_{\mathbb{R}^3}|[F(s,y,\eta)-\mu]| I_{\{|F(s,y,v)-\mu|\geq\mu\}}d\eta dy\Big)^{\f12}\nonumber\\
&\leq CN^{\f32}\sqrt{\mathcal{E}(F_0)},
\end{align}
where once again we have made a change of variable $v\rightarrow y=x-v(t-s)$ and used \eqref{4.2}, Proposition \ref{prop2.4} in \eqref{4.12-1} and \eqref{4.16}.  Choosing $N^{-\b+5}=N^{\f32}\sqrt{\mathcal{E}(F_0)}$ and 
combining  \eqref{4.15} together with \eqref{4.15-2}-\eqref{4.16}, one obtains that for $\Omega=\mathbb{T}^3$, 
\begin{align}\label{4.17}
\int_{\mathbb{R}^3}w(v)^{-1}|v|^2\left|\int_{\mathbb{R}^3}(1,\eta, |\eta|^2,\eta\otimes\eta)[F(s,y,\eta)-\mu(\eta)]d\eta\right|dv\leq C\mathcal{E}(F_0)^{\f{\b-5}{2\b-7}}.
\end{align}
Now, substituting \eqref{4.11}, \eqref{4.12-5} and \eqref{4.17} into \eqref{4.10}, one has
\begin{align}\label{4.13}
&\Big|\int_{\mathbb{R}^3}(1,v,|v|^2, v\otimes v)[F(t,x,v)-\mu(v)]dv\Big|\nonumber\\
&\leq e^{-\f{t}{1-\nu}}\Big|\int_{\mathbb{R}^3}(1,v,|v|^2, v\otimes v)[F_0(x-vt,v)-\mu(v)]dv \Big|\nonumber\\
&\qquad+C\l\sup_{y\in\mathbb{R}^3,t-\l\leq s\leq t }\left|\int_{\mathbb{R}^3}(1,\eta, |\eta|^2,\eta\otimes\eta)[F(s,y,\eta)-\mu]d\eta\right|\\
&\qquad+C\l^{-\f32}\sqrt{\mathcal{E}(F_0)}+C\mathcal{E}(F_0)^{\f{\b-5}{2\b-7}}.\nonumber
\end{align}
Choosing $\l>0$ suitably small such that $C\l\leq \f12$, then the desired estimate \eqref{4.5} follows from \eqref{4.13}. Therefore, we complete the proof of Lemma \ref{lem4.2}.
\end{proof}

\begin{corollary}\label{cor4.3}
There is a constant $\v_0>0$ such that if 
\begin{equation*}
\mathcal{E}(F_0)+\sup_{t\geq t_1,x\in \Omega}e^{-\f{t}{1-\nu}}\Big|\int_{\mathbb{R}^3}(1,v,|v|^2, v\otimes v)[F_0(x-vt,v)-\mu(v)]dv \Big|\leq \v_0,
\end{equation*}	
then it holds that 
\begin{align*}
C_1^{-1}\leq \r(t,x),~T(t,x)\leq C_1,~~\mbox{and}~~|u(t,x)|\leq C_1,
\end{align*}
for all $t\geq t_1$ and $x\in\Omega$.
\end{corollary}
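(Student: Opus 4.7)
The plan is to convert the moment control from Lemma~\ref{lem4.2} into pointwise bounds on the fluid fields $\r,u,T$. Under the a priori assumption \eqref{4.1}, Lemma~\ref{lem4.2} together with the hypothesis $\mathcal{E}(F_0)+\sup_{t\geq t_1,x}e^{-t/(1-\n)}|\int(1,v,|v|^2,v\otimes v)[F_0(x-vt,v)-\mu]dv|\leq \v_0$ gives, for every $t\geq t_1$ and $x\in\Omega$,
\begin{equation*}
\Big|\int_{\R^3}(1,v,|v|^2,v\otimes v)[F(t,x,v)-\mu(v)]\,dv\Big|\leq \v_0+C\sqrt{\v_0}+C\v_0^{\f{\b-5}{2\b-7}}=:\d(\v_0),
\end{equation*}
and $\d(\v_0)\to 0$ as $\v_0\to 0$.

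Next I would express the macroscopic quantities in terms of these deviation moments, exploiting $\int\mu\,dv=1$, $\int v\mu\,dv=0$ and $\int|v|^2\mu\,dv=3$:
\begin{equation*}
\r(t,x)-1=\int[F-\mu]\,dv,\quad (\r u)(t,x)=\int v[F-\mu]\,dv,\quad (\r|u|^2+3\r T)(t,x)-3=\int|v|^2[F-\mu]\,dv.
\end{equation*}
All three are controlled by $\d(\v_0)$, so that $|\r-1|\leq\d$, $|\r u|\leq\d$, and $|\r|u|^2+3\r T-3|\leq\d$.

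From $|\r-1|\leq\d$ I deduce $\r\in[1-\d,1+\d]$; then $|u|=|\r u|/\r\leq\d/(1-\d)$; and $3\r T=3+O(\d)-\r|u|^2$ forces $|T-1|=O(\d)$. Choosing $\v_0$ so small that $\d(\v_0)\leq 1/2$ places $\r,T\in[1/2,3/2]$ and $|u|\leq 1$, which lies strictly inside $[C_1^{-1},C_1]$ since $C_1\geq 1$. This strict improvement of \eqref{4.1} then closes the a priori assumption by a standard continuity bootstrap starting from $t=t_1$, where Lemma~\ref{lem3.2} already supplies the bounds (with room for the factor $2$).

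The main obstacle is not in the corollary itself but in Lemma~\ref{lem4.2} on which it rests: the mild formulation \eqref{4.4} must be linearized about $\mu$, the nonlinearity $M_\n(F)-\mu$ is Taylor-expanded and bounded by the very same tensor of macroscopic deviations, and the dispersive factor $(t-s)^{-3/2}$ arising from the change of variables $v\mapsto y=x-v(t-s)$ has to be integrated against the entropy dissipation, splitting into $\{|F-\mu|\leq\mu\}$ and $\{|F-\mu|\geq\mu\}$ and invoking Proposition~\ref{prop2.4}. Once that Grönwall-type inequality \eqref{4.5} is established, the present corollary is a direct algebraic consequence.
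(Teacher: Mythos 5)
Your proposal is correct and follows exactly the route the paper intends (the corollary is stated without an explicit proof precisely because it is the combination of Lemma \ref{lem4.2}, the identities relating $\int(1,v,|v|^2)[F-\mu]\,dv$ to $\rho-1$, $\rho u$, $\rho|u|^2+3\rho T-3$, and a continuity bootstrap starting from the bounds of Lemma \ref{lem3.2} at $t=t_1$, as in \cite{DHWY}). The only cosmetic point is that your claim that $[1/2,3/2]$ lies strictly inside $[C_1^{-1},C_1]$ ``since $C_1\geq 1$'' literally needs $C_1\geq 3/2$, which is harmless because the explicit $C_1$ of Lemma \ref{lem3.2} may always be enlarged.
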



\noindent{\bf Proof of Theorem \ref{thm1.1}.} It follows immediately from Lemma \ref{lem3.1}, Lemma \ref{lem3.2}, Lemma \ref{lem4.1}, Lemma \ref{lem4.2} and Corollary \ref{cor4.3}; see also \cite{DHWY}. \qed

\medskip

In the end of the paper, we give a remark on the stability of solutions. For simplicity, we consider the only torus case $\Omega=\T^3$ and $(M_0,J_0,E_0)=(0,0,0)$, namely initial data have the same fluid quantities as the normalized global Maxwellian $\mu$. As in \cite{CJM}, due to the Csisz\'ar-Kullback inequality, it is direct to verify that for all $t>0$,
$$
\|F(t)-\mu\|_{L^1_{x,v}}\leq \sqrt{2 H(F(t)|\mu)}\leq  \sqrt{2 H(F_0|\mu)}=\sqrt{2\CE(F_0)}\leq \sqrt{2\v_0},
$$
where $H(\cdot|\cdot)$ is the relative entropy defined by
$$
H(F|G)=\int_{\R^3}\int_{\T^3} F\log \frac{F}{G}\,dxdv.
$$
Therefore, the solution is stable uniformly in time in the sense of $L^1_{x,v}$ under the assumptions of Theorem \ref{thm1.1}. Moreover, it would be also interesting to further study the large-time behavior of solutions obtained in Theorem \ref{thm1.1}. However, as the global-in-time existence is proved in the non-perturbation framework, it seems impossible to make use of the same idea as in \cite{DHWY} (also cf.~\cite{Yun1}) to justify that the difference $F(t,x,v)-\mu$ without adding the extra velocity weight $\mu^{-1/2}$ should approach zero in some sense as time goes to infinity. As one of possibilities, we will see if or not the method developed in \cite{GMM} could be adapted to  treat this problem in the future.

\medskip

\noindent{\it Acknowledgments.}  
Renjun Duan is partially supported by the General Research Fund (Project No. 409913) from RGC of Hong Kong. Yong Wang is partially supported by National Natural
Sciences Foundation of China No. 11401565.  Tong Yang is partially supported by the General Research Fund of Hong Kong, CityU 103412.

\end{document}